
\documentclass[12pt,sumlimits,intlimits]{amsart}
\usepackage{url,cite,hyperref,amsmath,amsthm,amssymb,mathtools}
\usepackage[margin=1.25in]{geometry}

\newtheorem{theorem}{Theorem}[section]

\newtheorem{lemma}[theorem]{Lemma}

\newcommand{\V}{\Vert}

\newcommand{\Z}{\mathbb{Z}}
\newcommand{\Q}{\mathbb{Q}}
\newcommand{\C}{\mathbb{C}}
\newcommand{\N}{\mathbb{N}}

\newcommand{\R}{\mathbb{R}}
\newcommand{\T}{\mathbb{T}}

\newcommand{\ve}{\varepsilon}

\numberwithin{equation}{subsection}

\title{A note on strongly quasidiagonal groups}
\author{Caleb Eckhardt}
\address{Department of Mathematics, Miami University, Oxford, OH, 45056}
\email{eckharc@miamioh.edu}
\thanks{Partially supported by NSF grant  DMS-1262106.}

\date{}
\begin{document}
\maketitle
\begin{abstract} In this note we address a question of Don Hadwin: ``Which groups have strongly quasidiagonal C*-algebras?"  In recent work we showed that all finitely generated virtually nilpotent groups have strongly quasidiagonal C*-algebras, while together with Carri\'on and Dadarlat we showed that most wreath products fail to have strongly quasidiagonal C*-algebras.  These two results raised the question of whether or not strong quasidiagonality could characterize virtual nilpotence among finitely generated groups.
The purpose of this note is to provide examples of finitely generated groups (in fact of the form $\Z^3\rtimes \Z^2$) that are not virtually nilpotent yet have strongly quasidiagonal C*-algebras.  Moreover we show these examples are the ``simplest" possible  by proving that a group of the form $\Z^d\rtimes \Z$ is virtually nilpotent if and only if its group C*-algebra is strongly quasidiagonal.

\end{abstract}
\section{Introduction}  This paper continues our study of strongly quasidiagonal groups from \cite{Eckhardt13} and we refer the reader to the introduction of \cite{Eckhardt13} for more motivation and the definitions used here. Although the questions we consider are also interesting for general groups, in this work we restrict our attention to discrete groups.

This investigation grew out of a question asked by Don Hadwin in \cite{Hadwin87}: ``Which groups have strongly quasidiagonal C*-algebras." 
 A few pages after Hadwin's question, Jonathon Rosenberg showed in an appendix that if $G$ is not amenable, then $C^*(G)$ is not strongly quasidiagonal.   
On the amenable side of things, we showed in \cite{Eckhardt13} that $C^*(G)$ is strongly quasidiagonal if $G$ is virtually nilpotent.  On the other hand, together with Carri\'on and Dadarlat \cite{Carrion13}, we showed that many (amenable) wreath products are \emph{not} strongly quasidiagonal.  These results raised the question of whether or not for \emph{finitely generated} groups (see \cite[Corollary 1.7]{Eckhardt13} and following discussion) strong quasidiagonality of $C^*(G)$ could characterize virtual nilpotence (or perhaps supramenability) of $G.$

In this paper we show that strong quasidiagonality cannot characterize virtual nilpotence -- even among the polycyclic groups. In particular, we give examples of groups of the form $\Z^m\rtimes \Z^n$ where $m\geq3$ and $n\geq2$ that are not virtually nilpotent but are strongly quasidiagonal. This result relies heavily on Daniel Berend's investigation of commutaive endomorphic actions on tori \cite{Berend83}.

On the other hand we show that groups of the form $\Z^3\rtimes \Z^2$ are the ``simplest" possible, non-virtually nilpotent, polycyclic, strongly quasidiagonal groups by proving that groups of the form $\Z^d\rtimes\Z$ are strongly quasidiaognal if and only if they are virtually nilpotent. It is interesting to note that in the case of $G=\Z^d\rtimes\Z$ the question of strong quasidiagonality can be reduced to a family of problems of diophantine approximation where, roughly speaking, ``good" approximations correspond to the strong quasidiagonality of $G$ while ``bad" approximations correspond to $G$ not being strongly quasidiagonal.

\section{Construction of Examples}

All groups considered in this paper are of the form $\Z^m\rtimes\Z^n.$  We then realize the group C*-algebra of $\Z^m\rtimes\Z^n$ as a C*-crossed product via  $C^*(\Z^m\rtimes\Z^n)\cong C^*(\Z^m)\rtimes \Z^n\cong C(\widehat{\Z}^m)\rtimes \Z^n.$ Archbold and Spielberg showed \cite{Archbold94} that when analyzing the ideal structure of crossed products, topological freeness of the action is a desirable property.  Let us recall  that a group $G$ of homeomorphisms of a locally compact space $X$ is \emph{topologically free} if for any finite subset $F\subseteq G\setminus\{ e \}$, the set 
\begin{equation*}
\bigcap_{\alpha\in F} \{ x\in X: \alpha(x)\neq x \}
\end{equation*}
is dense in $X.$  Since $\widehat{\Z}^m$ is connected,  all of our examples will be topologically free.
\begin{lemma} \label{lem:topfree} Let $G$ be a connected topological group and $H$ a group of continuous automorphisms of $G.$  Then the action of $H$  on $G$ is topologically free.
\end{lemma}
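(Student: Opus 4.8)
The plan is to reduce topological freeness to a single structural fact about connected topological groups, namely that a proper closed subgroup has empty interior. For each nontrivial $\alpha\in H$ I would consider the fixed-point set $F_\alpha=\{x\in G:\alpha(x)=x\}$. Since $\alpha$ is a continuous automorphism, $F_\alpha$ is closed: it is the set where the continuous map $x\mapsto \alpha(x)x^{-1}$ equals $e$, equivalently the equalizer of $\alpha$ and the identity. Moreover $F_\alpha$ is a subgroup, because $\alpha$ preserves products and inverses. As $\alpha$ is not the identity automorphism, there is some $g$ with $\alpha(g)\neq g$, so $F_\alpha$ is a \emph{proper} closed subgroup of $G$.

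Next I would establish that any proper closed subgroup $K$ of a connected topological group has empty interior. The standard argument runs as follows: if $K$ had nonempty interior, then being a subgroup it would in fact be open, since left translates of an interior point by elements of $K$ cover $K$ with open sets. An open subgroup is automatically closed as well, because its complement is a union of (open) cosets. Thus $K$ would be a nonempty, proper, clopen subset of $G$, contradicting connectedness. Applying this to $F_\alpha$ shows that $F_\alpha$ is closed with empty interior, so its complement $U_\alpha:=\{x\in G:\alpha(x)\neq x\}$ is open and dense in $G$.

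Finally, for a finite subset $F\subseteq H\setminus\{e\}$, the set appearing in the definition of topological freeness is exactly $\bigcap_{\alpha\in F}U_\alpha$, a finite intersection of open dense subsets, which is again open and dense; this last step is elementary and needs only finiteness of $F$, not Baire category. I expect the only genuine content to lie in the second paragraph --- the clopen-subgroup argument --- while the rest is bookkeeping. The one point worth a careful word is the passage from ``nonempty interior'' to ``open,'' which uses that $G$ is a topological group rather than merely a topological space; the degenerate case $G=\{e\}$, where $H\setminus\{e\}=\varnothing$ and the intersection is vacuously all of $G$, causes no trouble.
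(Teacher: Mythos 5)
Your proof is correct and follows essentially the same route as the paper's: both identify the fixed-point set of each nontrivial $\alpha$ as a proper subgroup whose interior must be empty by connectedness, and then conclude by intersecting finitely many open dense complements. The only cosmetic difference is how you rule out nonempty interior --- you argue that an open subgroup is clopen, while the paper notes that an open neighborhood of the identity generates all of $G$; these are interchangeable formulations of the same standard fact.
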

\begin{proof} Let $\alpha\in H$, and define the subgroup $F(\alpha)=\{ x\in G:\alpha(x)=x  \}\leq G.$ If $F(\alpha)^c$ is not dense, then $F(\alpha)$ contains a non-empty open subset, hence $F(\alpha)$ contains an open neighborhood of the identity.  Since $G$ is connected, any open neighborhood of the identity generates $G$, hence $F(\alpha)=G$ or $\alpha= id.$ Topological freeness then follows from the fact that $F(\alpha)^c$ is open.

\end{proof}
\begin{lemma} \label{lem:goodorb} Let $G=\Z^n\rtimes \Z^m$ for some $m,n\geq1$.  Suppose that for every multiplicative character $\omega\in \widehat{\Z}^n$ we have $Orb(\omega)=\{ h(\omega):h\in \Z^m \}$ is either finite or dense.  Then $C^*(G)$ is strongly quasidiagonal.
\end{lemma}
\begin{proof} Let $\pi$ be a unitary representation of $G$ that is faithful on $C^*(\Z^n).$
By Lemma \ref{lem:topfree}, the induced action of $\Z^m$ on $\widehat{\Z}^n$ is topologically free.  By \cite[Theorem 1]{Archbold94} and the fact that $\Z^m$ is amenable, it follows that $\pi$ is faithful on $C^*(G).$  Since $\widehat{\Z}^n$ is connected and $\pi$ is faithful, the spectrum of $\pi(f)$ is connected for every $f\in C(\widehat{\Z}^n).$  In particular, $\pi(C(\widehat{\Z}^n))$ contains no non-trivial compact operators.  Again by \cite[Theorem 1]{Archbold94} it follows that $\pi(C^*(G))$ contains no compact operators.

Since $\pi$ is faithful and $\pi(C^*(G))$ has trivial intersection with the compacts, by Voiculescu's theorem \cite{Voiculescu76} (see also \cite{Davidson96}) $\pi$ is approximately unitarily equivalent to the left regular representation of $G.$  Since $G$ is polycyclic, it is residually finite (see \cite{Segal83}).  By a well-known result of Bekka \cite{Bekka90}, since $G$ is amenable the group C*-algebra $C^*(G)$ is residually finite dimensional, thus the left regular representation is quasidiagonal, forcing $\pi$ to be quasidiagonal as well.

Suppose now that $\pi$ is a unitary representation that is not faithful on $C^*(\Z^n).$  Then $\textup{ker}(\pi)\cap C^*(\Z^n)$ corresponds to a closed $\Z^m$-invariant subset of $\widehat{\Z}^n.$  By assumption this set must be finite (otherwise $\pi$ would be faithful on $C^*(\Z^n)$). In this case one sees that $\pi(C^*(G))$ is a subhomogeneous C*-algebra.  Since all irreducible representations of a subhomogeneous C*-algebras are finite dimensional, it is clear that all subhomogeneous C*-algebras are strongly quasidiagonal and in particular that $\pi$ is a quasidiagonal representation.
\end{proof}
It is not obvious that there exist \emph{any} pairs $\Z^n,\Z^m$ with action satisfying the above lemma. Thankfully, Daniel Berend (building on earlier work of Furstenberg \cite{Furstenberg67})  classified all endomorphic commutative semigroup actions on tori that satisfy the  orbit condition in Lemma \ref{lem:goodorb} which we will use to build a specific example.   Let us recall
\begin{theorem}[Berend \textup{\cite[Theorem 2.1]{Berend83}}] \label{thm:Berend} Let $\Sigma$ be a commutative semigroup of endomorphisms of $\T^n.$ Every orbit of the action of $\Sigma$ on $\T^n$ is either dense or finite if and only if all three of the following three conditions are satisfied:
\begin{enumerate}
\item There is a $\sigma\in\Sigma$ such that the characteristic polynomial of $\sigma^n$ is irreducible over $\Z$ for all $n\geq1.$
\item For every common eigenvector $v$ of $\Sigma$ there is a $\sigma\in \Sigma$ such that the corresponding eigenvalue $\lambda$ of $\sigma$ has $|\lambda|>1.$
\item There are $\sigma_1,\sigma_2\in \Sigma$ such that $\sigma_1^n=\sigma_2^m$ for $m,n\in\Z$ implies that $m=n=0.$
\end{enumerate}
\end{theorem}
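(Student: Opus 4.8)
The statement being a biconditional, the plan is to treat necessity and sufficiency separately, with essentially all of the difficulty concentrated in the sufficiency (the ``if'') direction. Throughout I would use the reformulation that ``every orbit is finite or dense'' is equivalent to the assertion that every infinite orbit closure $\overline{\Sigma\omega}\subseteq\T^n$ is in fact all of $\T^n$, so that the task becomes a classification of infinite closed $\Sigma$-invariant sets.

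For necessity I would argue contrapositively: a failure of any one of (1)--(3) should produce a proper, infinite, closed invariant set and hence an infinite non-dense orbit. If (3) fails, then any two elements of $\Sigma$ are multiplicatively dependent, so the action is, up to finite index, generated by a single endomorphism $\sigma$; but a single toral endomorphism always admits orbits that are neither finite nor dense (for instance the forward orbit of a point on the stable manifold of a periodic point, whose closure is that orbit together with the periodic cycle). If (2) fails there is a common eigendirection along which every element of $\Sigma$ is non-expanding, and passing to the associated quotient torus yields a factor on which $\Sigma$ acts through isometries or a finite group; such a factor manifestly carries infinite non-dense orbits, which pull back. The necessity of (1) is more delicate, since its negation need not hand over a \emph{common} invariant subtorus; here the role of irreducibility-for-all-powers of a distinguished element is to guarantee that that element is ergodic and has no proper invariant rational subtorus, and I would argue that without any such element one can arrange an invariant algebraic subset obstructing density.

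For sufficiency, assume (1)--(3), fix $\omega$ with infinite orbit, and set $X=\overline{\Sigma\omega}$, a closed $\Sigma$-invariant set; the goal is $X=\T^n$. The engine is a higher-dimensional Furstenberg rigidity argument. Condition (1) supplies a distinguished $\sigma_0$ all of whose powers have irreducible characteristic polynomial, forcing $\sigma_0$ to be ergodic with respect to Haar measure, with no proper invariant subtorus and with finite invariant sets consisting only of rational points; this provides the needed ``primitivity.'' Condition (2) ensures that in every common eigendirection some element of $\Sigma$ expands, so that no direction is dynamically frozen. Condition (3) supplies two multiplicatively independent elements, i.e.\ exactly the non-lacunarity hypothesis that drives Furstenberg's filling-up mechanism. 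The argument then has two stages: first, using expansion (condition (2)) together with the infinitude of $X$, I would pass to a minimal closed invariant subset and extract a coset of a nontrivial connected subgroup lying inside $X$ in an unstable direction; second, using the two independent maps of condition (3) and the irreducibility of condition (1) to prevent these cosets from being trapped in a proper invariant subtorus, I would propagate them under $\Sigma$ and take closures until the directions they span generate all of $\R^n$, forcing $X=\T^n$.

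The main obstacle is precisely this sufficiency direction, which is the higher-dimensional generalization of Furstenberg's $\times 2,\times 3$ theorem. The two delicate points are (i) producing the first connected sub-object inside $X$ --- the analogue of showing that Furstenberg's invariant set is not totally disconnected --- which requires quantitatively exploiting non-lacunarity, and (ii) propagating connectedness across the whole torus while certifying, via conditions (1) and (2), that no proper invariant subtorus and no isometric direction can obstruct the spreading. Controlling the interplay between the algebraic hypotheses (irreducibility and hyperbolicity) and the topological-dynamical filling argument is where essentially all of the work lies, and it is this coordination, rather than any single estimate, that I expect to be hardest.
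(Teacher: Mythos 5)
This theorem is not proved in the paper at all: it is quoted from Berend \cite[Theorem 2.1]{Berend83} and used as a black box, so there is no internal proof to compare your attempt against. Judged on its own terms, your proposal is a plausible road map of the known strategy (a higher-dimensional Furstenberg-type rigidity argument), but it is an outline rather than a proof. The sufficiency direction, which you correctly identify as carrying all the weight, is reduced to two stages --- ``extract a coset of a nontrivial connected subgroup inside $X$'' and ``propagate it until it fills $\T^n$'' --- and no mechanism is supplied for either; executing those two stages is essentially the entire content of Berend's paper (and already of Furstenberg's $\times 2,\times 3$ theorem in dimension one). As written, the sufficiency half is a statement of intent, not an argument.

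There are also concrete problems in the necessity direction. For the necessity of (2): a common eigenvector $v$ of a commuting family of integer matrices generally has irrational (algebraic, non-rational) coordinates, so the eigendirection does not span a rational subspace and there is no ``associated quotient torus'' to pass to; the factor you invoke does not exist, and Berend instead has to run a direct Diophantine construction of a point whose orbit stays trapped near a non-expanding direction. For the necessity of (3): your witness --- the forward orbit of a point on the stable manifold of a periodic point --- requires the single generating endomorphism to have a contracting direction, and fails when that endomorphism is expanding (e.g.\ $\times 2$ on $\T^1$, or any matrix with all eigenvalues of modulus greater than $1$); in that case one must produce an infinite non-dense orbit by a different route, such as a symbolic-dynamics or positive-entropy argument. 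The claim you need is still true, but the proof you offer for it does not cover these cases.
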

For the convenience of the reader we construct an explicit example of a non virtually nilpotent group satisfying Berend's conditions.
\begin{theorem} There is an action $\varphi$ of $\Z^2$ on $\Z^3$ such that the semidirect product  $\Z^3\rtimes_\varphi \Z^2$ is strongly quasidiagonal and not virtually nilpotent.
\end{theorem}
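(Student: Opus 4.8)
The plan is to produce the action from the unit group of a totally real cubic field, so that Berend's criterion (Theorem \ref{thm:Berend}) applies to the dual action on $\T^3\cong\widehat{\Z}^3$ and Lemma \ref{lem:goodorb} delivers strong quasidiagonality. Concretely, I would fix a totally real cubic field $K=\Q(\lambda)$, where $\lambda$ is a root of an explicit irreducible cubic with constant term $\pm1$ (for instance $f(x)=x^3-3x-1$, which has discriminant $81>0$ and hence three real roots $\lambda=2\cos(\pi/9)$ and its conjugates). Since the constant term is $\pm1$, $\lambda$ is an algebraic unit and the companion matrix of $f$ lies in $SL_3(\Z)$. Because $K$ is totally real of degree $3$, Dirichlet's unit theorem gives $\mathcal{O}_K^\times/\{\pm1\}\cong\Z^2$, so I may choose two multiplicatively independent units $\ve_1,\ve_2$ and let $A,B\in GL_3(\Z)$ be the matrices of multiplication by $\ve_1,\ve_2$ on the lattice $\Z[\lambda]\cong\Z^3$. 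As $\ve\mapsto M_\ve$ is an injective ring homomorphism of $K$ into $M_3(\Q)$, the commuting matrices $A,B$ generate a copy of $\Z^2$ in $GL_3(\Z)$, and I take $\varphi$ to be the resulting $\Z^2$-action. (The dual action on $\T^3$ is by the transpose--inverses; this only inverts eigenvalues, which affects none of the conditions below, since $\ve^{-1}$ is again a unit.)

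I would then verify Berend's three conditions for the commutative group $\Sigma=\{A^iB^j:i,j\in\Z\}$. For condition (1), take $\sigma=A$: the characteristic polynomial of $A^n$ is $\prod_{i}(x-\tau_i(\lambda)^n)$, the minimal polynomial of $\lambda^n$, and this has degree $3$ (hence is irreducible) because $\lambda^n\notin\Q$ for all $n\geq1$ --- indeed $\lambda=2\cos(\pi/9)\approx1.88$ forces $|\lambda^n|\to\infty$, so $\lambda^n\neq\pm1$, while $\lambda^n\in\Q$ would force $\lambda^n=\pm1$ as $\lambda^n$ is a rational algebraic integer; here one uses that $K/\Q$ has prime degree and so no proper intermediate field. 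For condition (3), the multiplicative independence of $\ve_1,\ve_2$ gives $A^n=B^m\Rightarrow n=m=0$, so $\sigma_1=A,\sigma_2=B$ work. Condition (2) is the only one requiring genuine input: the common eigenvectors of the simultaneously diagonalizable family $\Sigma$ are the three eigenlines attached to the real embeddings $\tau_1,\tau_2,\tau_3\colon K\to\R$, on which $M_\ve$ acts by $\tau_i(\ve)$, so I must exhibit for each $i$ a unit $\ve$ with $|\tau_i(\ve)|>1$. This follows from the shape of the Dirichlet log-unit lattice: the map $\ve\mapsto(\log|\tau_1(\ve)|,\log|\tau_2(\ve)|,\log|\tau_3(\ve)|)$ sends $\mathcal{O}_K^\times$ onto a full-rank lattice in the plane $\{x_1+x_2+x_3=0\}$, and a full-rank lattice meets every open half-plane $\{x_i>0\}$.

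Granting these, Theorem \ref{thm:Berend} shows that every $\Sigma$-orbit on $\T^3$ is finite or dense, so Lemma \ref{lem:goodorb} yields that $C^*(\Z^3\rtimes_\varphi\Z^2)$ is strongly quasidiagonal. To see that $G=\Z^3\rtimes_\varphi\Z^2$ is not virtually nilpotent, note that the eigenvalues of $A$ are the real conjugates $\tau_1(\lambda),\tau_2(\lambda),\tau_3(\lambda)$; their product is the norm $N(\lambda)=\pm1$ and they are not all of absolute value $1$ (otherwise $\lambda$ would be a root of unity by Kronecker's theorem), so $A$ has an eigenvalue of absolute value $>1$. Hence the subgroup $\Z^3\rtimes_A\Z\leq G$ already has exponential growth, and therefore so does $G$; by the Milnor--Wolf theorem a finitely generated group of exponential growth is not virtually nilpotent.

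The step I expect to be the main obstacle is condition (2): one must correctly match the common eigenvectors of the commuting family with the archimedean places of $K$ and then use the precise geometry of the Dirichlet log-unit lattice to guarantee a unit that is expanding at each place. Once this field-theoretic dictionary is in place, the remaining items --- irreducibility in (1), independence in (3), and the exponential-growth computation --- are routine.
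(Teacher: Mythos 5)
Your proposal is correct and follows essentially the same route as the paper: a totally real cubic field, its ring of integers as $\Z^3$, Dirichlet's unit theorem to get a rank-two unit group acting by multiplication, Berend's theorem, and Lemma \ref{lem:goodorb}. The only differences are cosmetic --- a different explicit cubic, and your log-unit-lattice verification of condition (2) where the paper simply observes that each real conjugate of $\alpha$ has modulus $\neq 1$, so either $\alpha$ or $\alpha^{-1}$ expands on each eigenline.
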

\begin{proof}
Our example is derived from elementary algebraic number theory and the results and terminology used can be found in any text on the subject, for example \cite{Ribenboim01}.

Let $\alpha$ be a root of $p(t)=t^3+t^2-2t-1$ (or any monic cubic with integer coefficients and distinct irrational roots). Let $\Q(\alpha)$ be the field generated by $\Q$ and $\alpha$, and let $A\subseteq \Q(\alpha)$ be the ring of algebraic integers.  Since $[\Q(\alpha):\Q]=3$, we have $A$ (as an additive abelian group) is isomorphic to $\Z^3$ (see \cite[Theorem 6.2.J.2]{Ribenboim01}).   Let $U$ be the multiplicative groups of units of $A.$
By Dirichlet's Units Theorem (see \cite[Theorem 10.4.J.1]{Ribenboim01}), $U$ is  finitely generated and the torsion free part of $U$ has rank 2. We consider $U\leq GL(3,\Z)$ by letting elements of $U$ act on $A\cong \Z^3$ by multiplication. 

Since $\alpha(\alpha^2+\alpha-2)=1$, we have  $\alpha\in U.$  Moreover since $1,\alpha,\alpha^2$ are linearly independent over $\Q$, by considering the action of $\alpha$ on $\Q-\textup{span}\{ 1,\alpha,\alpha^2  \}$ we see that the characteristic polynomial of $\alpha\in GL(3,\Z)$ is $p(t).$ From this it easily follows that $\alpha$ satisfies condition (1) of Theorem \ref{thm:Berend} and that for each eigenvector of $\alpha$ either $\alpha$ or $\alpha^{-1}$ will satisfy condition (2).  Since $\alpha$ has infinite order and the torsion free part of $U$ has rank 2, it follows that there is a $\beta\in U$ such that $\alpha$ and $\beta$ generate a copy of $\Z^2.$ 

Letting $\Sigma\cong \Z^2$ be the group generated by $\alpha$ and $\beta$ we see that $\Sigma$ satisfies all the hypotheses of Theorem \ref{thm:Berend}. Let $\varphi:\Z^2\rightarrow Aut(\Z^3)$ be the action of $U$ on $A$ by multiplication, restricted to $\Sigma.$ Since $\alpha$ has an eigenvalue with modulus not equal to one, it follows that $\Z^3 \rtimes_\varphi \Z^2$ is not virtually nilpotent (in fact it contains a free subsemigroup on 2 generators \cite{Rosenblatt74}).  By Lemma \ref{lem:goodorb}, $\Z^3 \rtimes_\varphi \Z^2$ is strongly quasidiagonal.
\end{proof} 
\section{The need for two automorphisms}
The question of whether or not a C*-algebra of the form $C(X)\rtimes \Z$ is strongly quasidiagonal can be reduced to a question about the relationship between the backwards and forward orbits of points in $X$ (see \cite[Theorem 25]{Hadwin87} and \cite{Smucker82,Pimsner83} for some precursors and related variations).  In the specific case of $C^*(\Z^d\rtimes \Z)\cong C(\widehat{\Z}^d)\rtimes \Z$, the question about orbits reduces to a family of diophantine approximation questions. In this section we will see that  ``good" approximations correspond to strongly quasidiagonal groups and ``bad" approximations correspond to non-strongly quasidiagonal groups.
 
Let $x\in\R.$  We write $\V x \V=\textup{dist}(x,\Z).$
A general question in the theory of diophantine approximation concerns the distribution of sequences of the form $\V t_n\xi\V.$  The specific instance of this question addressing our concerns asks,  (Q) ``Given a sequence of real numbers $t_n$, when does there exist a $\xi\in\R$ and $\ve>0$ such that $\V t_n\xi \V\geq\ve$ for all $n\geq1$?" In relation to strong quasidiagonality of $\Z^d\rtimes\Z$, two types of sequences appear: (i) (finite families of) real valued polynomials evaluated at $n=1,2,...$ and (ii) sequences of exponential growth.  In case (i) the answer to (Q) is always no by the following theorem of Cook
\begin{theorem}[Cook, \textup{\cite[Theorem]{Cook74}}] \label{thm:Cook}Let $d,R\geq1$ be integers.  Let $p_1,...,p_R$ be polynomials with real coefficients, no constant term and  degree bounded by $d.$  Then there are constants $C,\epsilon>0$ depending only on $d$ and $R$ such that for every integer $N$, there is an integer $0\leq n\leq N$ such that
\begin{equation*}
\max_{i=1,...,n}\{  \V p_i(n) \V   \}<\frac{C}{N^\epsilon}.
\end{equation*}
\end{theorem}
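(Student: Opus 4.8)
The plan is to prove the statement by induction on the degree bound $d$, treating the number of forms $R$ as a free parameter so that the inductive hypothesis can be applied to a \emph{larger} family of lower-degree polynomials produced along the way. The base case $d=1$ is exactly Dirichlet's theorem on simultaneous Diophantine approximation: given linear forms $p_i(n)=a_i n$ and any $N$, there is an integer $1\le n\le N$ with $\max_i \V a_i n\V < N^{-1/R}$, so one may take $\epsilon=1/R$ and $C=1$. Crucially, the constants coming out of Dirichlet's theorem and out of pigeonhole depend only on the number of forms and the ambient dimension, never on the individual coefficients $a_i$; preserving this uniformity through the induction is what will ultimately force $C$ and $\epsilon$ to depend only on $d$ and $R$.

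For the inductive step I would use Weyl's method of differencing. Writing $\Delta_h p_i(n)=p_i(n+h)-p_i(n)$, each $\Delta_h p_i$ has degree at most $d-1$, and after subtracting its constant term it becomes a degree-$(d-1)$ polynomial with no constant term, to which the inductive hypothesis applies. The clean identity available here is the telescoping sum
\[
p_i(Kh)=\sum_{l=0}^{K-1}\Delta_h p_i(lh),
\]
valid precisely because $p_i(0)=0$, which in principle recovers control of $\V p_i(n)\V$ at the single point $n=Kh\le N$ once all the differences $\Delta_h p_i(lh)$ are small. The program is then to choose a step $h$ in a range $1\le h\le H$, apply the degree-$(d-1)$ bound, and balance the parameters $H$, $K$ against $N$.

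The main obstacle is the reconstruction step, and this is where the argument is genuinely delicate rather than routine. The inductive hypothesis only furnishes \emph{one} small difference, whereas the telescoping sum needs the differences to be small along an entire arithmetic progression of length $K$, and differencing has discarded exactly the constant term $p_i(h)$ that one would like to control. Making this work requires choosing $H$ and $K$ as explicit powers of $N$ so that the accumulated error stays below $C N^{-\epsilon}$; each differencing stage costs a factor in the exponent, so one expects $\epsilon$ to shrink geometrically in $d$ (of the order $2^{-(d-1)}$ times the base exponent). An alternative and in some ways cleaner route is to bound the associated Weyl exponential sums $\sum_{n\le N} e^{2\pi i p_i(n)}$ directly: equidistribution of polynomial sequences then shows that the orbit $(p_1(n),\dots,p_R(n))\bmod 1$ must return near the origin, and the quantitative form of Weyl's inequality supplies the uniform rate $N^{-\epsilon}$. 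Either way, the conceptual heart of the matter is to extract a single \emph{uniformly good return time} rather than the merely close pair that naive pigeonhole on $\T^R$ would give, and it is exactly the polynomial structure---that finite differences lower the degree---that makes beating pigeonhole possible.
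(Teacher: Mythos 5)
The paper offers no proof of this statement: it is quoted, with attribution, from Cook's paper \cite{Cook74} and used as a black box, so there is no internal argument to compare yours against. Judged on its own terms, your proposal contains a genuine gap --- the very one you flag in your last paragraph --- and it is not a repairable detail but the crux of the problem. The inductive hypothesis, applied to the degree-$(d-1)$ family $\{\Delta_h p_i - p_i(h)\}$, yields a \emph{single} integer $n$ at which every member of that family is small modulo $1$. The telescoping identity $p_i(Kh)=\sum_{l=0}^{K-1}\Delta_h p_i(lh)$ instead needs $\V \Delta_h p_i(lh)\V$ to be small \emph{simultaneously for all} $l=0,\dots,K-1$, a conclusion of an entirely different (and stronger) shape than the one being propagated; the induction as set up simply does not furnish it. Moreover the constant terms you subtracted re-enter the sum as $K\,p_i(h)$: even if each centered difference were bounded by $\delta$, you would only obtain $\V p_i(Kh)\V \le K\delta + \V K p_i(h)\V$, and $\V K p_i(h)\V$ need not be small when $\V p_i(h)\V$ is, since the distance to $\Z$ is amplified by the factor $K$ --- which must itself be a power of $N$ for $Kh$ to range over a nontrivial portion of $[0,N]$. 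No balancing of $H$ and $K$ against $N$ closes this as stated.

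Your ``alternative route'' via exponential sums is in fact the correct one --- Cook's argument, following Danicic, runs through Weyl sum estimates --- but as written it is a gesture rather than a proof. Two ingredients are missing. First, the transference step: one must show that if \emph{no} $n\le N$ satisfies $\max_i \V p_i(n)\V < CN^{-\epsilon}$, then some nontrivial integer combination $m_1p_1+\cdots+m_Rp_R$ with $|m_i|\ll N^{\epsilon}$ has a large Weyl sum over $n\le N$; this requires Vinogradov's lemma on fractional parts or an Erd\H{o}s--Tur\'an--Koksma-type inequality, not merely the qualitative statement that polynomial orbits equidistribute. Second, Weyl's inequality then forces good rational approximations to the \emph{leading} coefficient of that combination only, and one must descend through the lower-order coefficients (or induct on the degree at the level of exponential sums) while keeping all constants depending only on $d$ and $R$. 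Until both steps are carried out with that uniformity, the asserted dependence of $C$ and $\epsilon$ on $d$ and $R$ alone is not established.
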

Using Cook's result one can prove strong quasidiagonality of virtually nilpotent groups of the form $\Z^d\rtimes \Z$ (or just use \cite{Eckhardt13}). 

In case (ii) the answer to (Q) is always yes. Independently, Khintchine \cite{Khintchine26}, Pollington \cite{Pollington79} and de Mathan \cite{Mathan80} showed that for any lacunary sequence ($(t_n)$ is lacunary if there is an $r>1$ so $t_{n+1}/t_n\geq r$ for all $n\geq1$) the answer to (Q) is yes (in fact, they proved stronger statements).

Recently, the preprint \cite{Haynes13} appeared with a simplified  proof of (a weaker version of) the Khintchine-Pollington-de Mathan result. A reading of their proof reveals that it can be easily adapted to prove the following variant:
\begin{lemma} \label{lem:expgrowth} Let $t_n$ be a sequence of real numbers and $r,C>1$ such that $C^{-1}r^n\leq t_n \leq Cr^n$ for all $n\geq1.$  Then there is a  $\xi\in\R$ and an $\epsilon>0$ such that $\V t_n\xi \V\geq\ve$ for all $n\geq1.$
\end{lemma}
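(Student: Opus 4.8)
The plan is to construct $\xi$ by a nested (Cantor-type) interval argument: I will build a decreasing sequence of closed intervals $I_0\supseteq I_1\supseteq\cdots$ such that every $\xi\in I_N$ satisfies $\V t_n\xi\V\ge\ve$ for all $n\le N$, and then take $\xi$ in the intersection $\bigcap_N I_N$, which is nonempty by compactness. For a single index $n$ the ``bad'' set $\{x:\V t_nx\V<\ve\}$ is a union of intervals of length $2\ve/t_n$ spaced $1/t_n$ apart, so its complement consists of ``good'' gaps of length $(1-2\ve)/t_n$; the whole construction thus amounts to showing that, scale by scale, enough of these good gaps survive.

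If $r$ were large the argument is immediate: any interval of length $\ge 2/t_n$ contains a full good gap for the $n$-th constraint, so as long as $|I_{n-1}|\ge 2/t_n$ one may take $I_n$ to be such a gap, of length $(1-2\ve)/t_n$. Since $|I_{n-1}|=(1-2\ve)/t_{n-1}$, this succeeds provided $t_n/t_{n-1}\ge 2/(1-2\ve)$, which the hypothesis $C^{-1}r^n\le t_n\le Cr^n$ guarantees once $r>2C^2$; choosing $\ve$ small then finishes this case.

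The main obstacle is the regime $r$ close to $1$ (indeed $r\le 2$), where consecutive scales $t_{n-1},t_n$ are too close for a single good gap to be guaranteed and processing one constraint at a time fails. Here I would group the indices into blocks $(n_k,n_{k+1}]$ of a fixed large length $s=s(r,C)$, chosen so that $t_{n_{k+1}}/t_{n_k}\ge C^{-2}r^s$ is large (this is where the lower bound $t_n\ge C^{-1}r^n$ enters), and pass from $I_{n_k}$ directly to $I_{n_{k+1}}$ by imposing all the block constraints at once. Given a parent interval $J$ of length $\asymp 1/t_{n_k}$, the goal is to produce inside $J$ a single good gap of the combined constraint $\bigcap_{n_k<n\le n_{k+1}}\{\V t_nx\V\ge\ve\}$ of length $\asymp 1/t_{n_{k+1}}$, able to serve as the parent for the next block. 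I would do this by a pigeonhole (gap-counting) estimate: provided $\ve s$ is small (say $\ve\asymp 1/s$) the combined good set occupies at least half of $J$, while the number of its connected components is at most one more than the number of bad components, namely $\sum_{n_k<n\le n_{k+1}}(|J|t_n+1)\lesssim \frac{C^2 r}{r-1}\,t_{n_{k+1}}/t_{n_k}$, where the upper bound $t_n\le Cr^n$ is used to sum the geometric series. Hence the largest surviving gap has length $\gtrsim \frac{r-1}{C^2 r}\cdot\frac1{t_{n_{k+1}}}$. The decisive feature is that the parent length $1/t_{n_k}$ cancels out of this ratio, so the estimate is self-sustaining: fixing $\ve\asymp 1/s$ with $s$ large enough (depending only on $r,C$), each block produces a gap of the required relative size, the construction never terminates, and any $\xi\in\bigcap_k I_{n_k}$ satisfies $\V t_n\xi\V\ge\ve$ for every $n$ (the finitely many indices $n\le n_1$ being absorbed into the first block). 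This blocking-and-counting scheme is precisely the adaptation of the argument in \cite{Haynes13}.
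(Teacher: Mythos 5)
Your argument is correct, but note that the paper itself does not actually prove Lemma \ref{lem:expgrowth}: it only asserts that the argument of \cite{Haynes13} ``can be easily adapted'' and, in the acknowledgements, credits a MathOverflow answer \cite{Tao13}, so there is no in-paper proof to compare against. What you have written is precisely the missing adaptation, and the quantitative heart of it checks out. With $|J|=c/t_{n_k}$, the two-sided bound gives $\sum_{n_k<n\le n_{k+1}}t_n\le \frac{C^2r}{r-1}t_{n_{k+1}}$ and $\sum_{n>n_k}t_n^{-1}\le \frac{C^2}{r-1}t_{n_k}^{-1}$, so the bad set in $J$ has measure at most $2\ve\bigl(s+\tfrac{2C^2}{c(r-1)}\bigr)|J|\le |J|/2$ for $\ve$ small, while the good set has at most $1+2s+\frac{cC^2r}{r-1}\cdot\frac{t_{n_{k+1}}}{t_{n_k}}$ components; taking $c=\frac{r-1}{4C^2r}$ the largest good component has length at least $c/t_{n_{k+1}}$ provided $\frac{t_{n_{k+1}}}{t_{n_k}}\ge C^{-2}r^s\ge 4(1+2s)$, which fixes $s$ in terms of $r,C$ alone --- so the induction is indeed self-sustaining exactly as you claim. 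The only bookkeeping worth making explicit is the order of quantifiers (choose $c$ first so the coefficient of $t_{n_{k+1}}/t_{n_k}$ in the component count is $1/4$, then $s$, then $\ve$), and the harmless replacement of $|J|t_n+1$ by $|J|t_n+2$ to account for partial bad intervals at both ends of $J$. Compared with the lacunary-sequence results of Khintchine--Pollington--de~Mathan that the paper cites, your blocking device is exactly what is needed here, since the hypothesis $C^{-1}r^n\le t_n\le Cr^n$ does not make $(t_n)$ lacunary (consecutive ratios can even be less than $1$), but does make every $s$-step ratio at least $C^{-2}r^s$; this is the point the paper leaves to the reader, and your write-up supplies it correctly.
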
  
Using the above Lemma we can produce non quasidiagonal representations of non virtually nilpotent groups of the form $\Z^d\rtimes\Z.$

Recall that a unital C*-algebra $A$ is \emph{finite} if $x^*x=1$ implies $xx^*=1$ for all $x\in A.$
\begin{theorem} \label{thm:vnilsqd} Let $G=\Z^d\rtimes_\alpha \Z.$  The following are equivalent
\begin{enumerate}
\item $G$ is virtually nilpotent.
\item $C^*(G)$ is strongly quasidiagonal.
\item Every quotient of $C^*(G)$ is finite.
\end{enumerate}
\end{theorem}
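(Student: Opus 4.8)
The plan is to establish the cycle $(1)\Rightarrow(2)\Rightarrow(3)\Rightarrow(1)$, and at the outset I would translate the dynamics into linear algebra: writing $\alpha = A\in GL(d,\Z)$, the group $G=\Z^d\rtimes_A\Z$ is virtually nilpotent precisely when every eigenvalue of $A$ lies on the unit circle. Indeed, since $\det A=\pm1$ and the eigenvalues are algebraic integers closed under Galois conjugation, Kronecker's theorem upgrades ``modulus one'' to ``root of unity,'' so $A$ is quasi-unipotent, a suitable power $A^k$ is unipotent, and the finite-index subgroup $\Z^d\rtimes_{A^k}\Z$ is nilpotent; the converse is the failure of polynomial growth. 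Hence $\neg(1)$ is equivalent to the existence of an eigenvalue $\lambda$ with $|\lambda|\neq1$, and because the eigenvalues multiply to $\pm1$ we may assume $|\lambda|<1$. The implication $(1)\Rightarrow(2)$ is already available as the virtually nilpotent case of \cite{Eckhardt13} (or directly from Cook's Theorem \ref{thm:Cook}). For $(2)\Rightarrow(3)$ I would argue abstractly: given an ideal $I$, compose a faithful representation of $C^*(G)/I$ with the quotient map; strong quasidiagonality forces its image $C^*(G)/I$ to be a quasidiagonal C*-algebra, and quasidiagonal C*-algebras are stably finite, so the unital quotient $C^*(G)/I$ is finite.

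The substance is $(3)\Rightarrow(1)$, which I would prove contrapositively by manufacturing a non-finite quotient whenever $|\lambda|<1$. Realize $C^*(G)\cong C(\T^d)\rtimes_B\Z$ with $B$ the dual automorphism and $u$ the canonical unitary ($u f u^*=f\circ B^{-1}$). Let $E^s\subseteq\R^d$ be the (real, $B$-invariant) generalized eigenspace for eigenvalues of modulus $<1$; forward iteration contracts $E^s$ to $0$. I would choose a point $\omega\in E^s$, viewed in $\T^d$, so that $B^n\omega\to0$ while the backward orbit stays a definite distance from $0$ in the torus, that is, $\textup{dist}(B^{-n}\omega,\Z^d)\geq\ve$ for all $n\geq1$. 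The latter is a diophantine statement: pairing with a suitable integer covector $m$ gives $\textup{dist}(B^{-n}\omega,\Z^d)\geq \V\la m,B^{-n}\omega\ra\V/\|m\|$, and placing $\omega=\xi v$ on the expanding eigenline of $B^{-1}$ turns $\V\la m,B^{-n}\omega\ra\V$ into $\V t_n\xi\V$ with $t_n\asymp r^n$, $r=|\lambda|^{-1}>1$. Lemma \ref{lem:expgrowth} then furnishes the required $\xi$ and $\ve>0$; this is exactly where the off-circle eigenvalue enters, and it is dual to the virtually nilpotent case, where the orbit sequences are polynomial and Cook's theorem forbids such separation.

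With such an $\omega$ in hand, the orbit closure $Y=\overline{\{B^n\omega:n\in\Z\}}$ is a $B$-invariant closed subset of $\T^d$, so $C(Y)\rtimes\Z$ is a quotient of $C^*(G)$. The forward tail is a sequence converging to the fixed point $0$, while the uniform bound $\textup{dist}(B^{-n}\omega,0)\geq\ve$ keeps the backward orbit (and its accumulation set) away from $0$; it is precisely the ``for all $n$'' in Lemma \ref{lem:expgrowth} that makes $C=\{0\}\cup\{B^n\omega:n\geq N\}$ a \emph{clopen} subset of $Y$ for large $N$. Since $B$ contracts the tail, $B(C)=\{0\}\cup\{B^n\omega:n\geq N+1\}\subsetneq C$, and $\chi_C-\chi_{B(C)}$ is the minimal projection supported at the isolated point $B^N\omega$. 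Then $w=u\chi_C$ satisfies $w^*w=\chi_C$ and $ww^*=\chi_{B(C)}$, so $T=w+(1-\chi_C)$ obeys $T^*T=1$ and $TT^*=1-(\chi_C-\chi_{B(C)})\neq1$. Thus $C(Y)\rtimes\Z$ is a non-finite quotient of $C^*(G)$, contradicting (3); this is the concrete form of the orbit criterion behind \cite[Theorem 25]{Hadwin87}.

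The main obstacle I anticipate is the step converting the \emph{scalar} diophantine separation of Lemma \ref{lem:expgrowth} into the required \emph{geometric} separation on the full torus. When $\lambda$ is real and of largest reciprocal modulus the pairing $\la m,B^{-n}\omega\ra$ is a single clean exponential $t_n\xi$ with $t_n=r^n$, and the reduction is immediate; but there need be no real off-circle eigenvalue (for instance with two complex-conjugate pairs, one expanding and one contracting), and then $\la m,B^{-n}\omega\ra$ behaves like $|\lambda|^{-n}\cos(n\theta+\phi)$, which oscillates and is neither monotone nor comparable to $r^n$, so the one-dimensional lemma does not apply directly. Handling this complex case — either by aligning the pairing with a real two-dimensional structure, passing to a power whose spectrum contains a real expanding eigenvalue, or invoking a genuinely multidimensional form of the Khintchine--Pollington--de Mathan estimate — is the crux of the argument; the clopen/compression construction itself is robust and requires only that the uniform torus-distance bound survive the reduction.
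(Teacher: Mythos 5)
Your overall architecture matches the paper's: $(1)\Rightarrow(2)$ via \cite{Eckhardt13} (or Cook's theorem), $(2)\Rightarrow(3)$ by the standard fact that quasidiagonal C*-algebras are stably finite, and the real content in $(3)\Rightarrow(1)$, where a point on an eigenline for an eigenvalue off the unit circle, together with Lemma \ref{lem:expgrowth}, produces a quotient crossed product containing a proper isometry (your explicit clopen-set/compression construction is a concrete rendering of \cite[Theorem 25]{Hadwin87}, which the paper simply cites). For a \emph{real} eigenvalue off the unit circle, your reduction to $\V t_n\xi\V\geq\ve$ is exactly the paper's argument.

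The genuine gap is the one you flag yourself: the case where the only eigenvalues off the unit circle are non-real. None of your three suggested escape routes is carried out, and the second one (passing to a power of $\alpha$ with a real expanding eigenvalue) fails outright whenever $\lambda/|\lambda|$ is not a root of unity. The paper's resolution is elementary and worth recording: with $\alpha w=\lambda w$, normalized so $w_1=1$ and $w_2=a+bi$ with $b\neq0$, set $v=\tfrac12(w+\overline{w})\in\R^d$; then $\alpha^n(v)_1=\textup{Re}(\lambda^n)$ and $\alpha^n(v)_2=a\,\textup{Re}(\lambda^n)-b\,\textup{Im}(\lambda^n)$. The key observation is that $(x,y)\mapsto\max\{|x|,\,|ax-by|\}$ is a norm on $\R^2$ (it is definite precisely because $b\neq0$), hence comparable to the Euclidean norm; since $\textup{Re}(\lambda^n)^2+\textup{Im}(\lambda^n)^2=|\lambda|^{2n}$, the sequence $t_n=\max\{|\alpha^n(v)_1|,|\alpha^n(v)_2|\}$ satisfies $C^{-1}|\lambda|^n\leq t_n\leq C|\lambda|^n$ even though each coordinate separately oscillates like $|\lambda|^n\cos(n\theta+\phi)$. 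This is exactly why Lemma \ref{lem:expgrowth} is formulated for sequences merely \emph{comparable} to $r^n$ rather than for genuine geometric sequences: applying it to $t_n$ yields $\xi,\ve$ such that for every $n$ at least one of $\V\xi\alpha^n(v)_1\V,\V\xi\alpha^n(v)_2\V$ is at least $\ve$, which is enough separation of the relevant half-orbit of $\xi v$ from the fixed point to run Hadwin's criterion (or your clopen construction). So the fix is not a multidimensional Khintchine--Pollington--de Mathan theorem but a two-coordinate bookkeeping device that feeds back into the one-dimensional lemma; without it, your proof covers only matrices possessing a real eigenvalue of modulus different from $1$.
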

\begin{proof} It is well-known that (2)  implies (3) for any C*-algebra (see for example \cite{Brown04}). It was shown in \cite{Eckhardt13} that  (1) implies (2). For the illumination of the relationship between ``good" approximations and strong quasidiagonality of groups of the form $\Z^d\rtimes\Z$ we now outline an alternate--and more direct--proof that (1) implies (2).

To this end suppose that $G$ is virtually nilpotent.  Then there is some $n\in\N$ so 1 is the only eigenvalue of $\alpha^n.$  Note that if $\alpha^n$ satisfies the hypotheses of \cite[Theorem 25]{Hadwin87} then so does $\alpha.$  Therefore assume that 1 is the only eigenvalue of $\alpha$, from which it follows that $(1-\alpha)$ is a nilpotent matrix.  By basic facts about linear algebra and free abelian groups we may assume that $\alpha$ is upper triangular with 1's along the diagonal. From this it follows that there are polynomials $p_{ij}$ for $1\leq i<j\leq d$ such that $p_{ij}(0)=0$ and
\begin{equation*}
(\alpha^n)_{ij}=\left\{ \begin{array}{ll} 0 & \textrm{ if }i>j\\
                                                          1 & \textrm{ if }i=j\\
                                                          p_{ij}(n) & \textrm{ if }i<j\\
                                 \end{array} \right.
\end{equation*}
for all $n\geq 0.$

Let $\theta\in \T^d\cong \widehat{\Z}^d$ and choose real numbers $\theta_1,...,\theta_d$  so  $\theta=(\exp(2\pi i\theta_j))_{j=1}^d.$  Consider the real polynomials 
\begin{equation*}
q_i(x)=\sum_{j=i+1}^d p_{ij}(x)\theta_j\quad \textrm{ for }i=1,...,d-1.
\end{equation*}
Let $\alpha$ also denote the induced action on $\widehat{\Z}^d.$ One sees that for $n\geq0$ we have
\begin{align*}
\alpha^n(\theta) &=\theta\cdot (\exp(2\pi i(q_1(n))),...,\exp(2\pi i(q_{d-1}(n))),1))\\
&= \theta\cdot(\exp(2\pi i(\V q_1(n))\V),...,\exp(2\pi i(\V q_{d-1}(n))\V),1)).
\end{align*}
By Theorem \ref{thm:Cook} we can find $n$ large enough so 
\newline
$(\exp(2\pi i(\V q_1(n))\V),...,\exp(2\pi i(\V q_{d-1}(n))\V),1))$ is as close to $(1,...,1)$ as we like.  Hence we can choose $n$ large enough so $\alpha^n(\theta)$ is as close to $\theta=\alpha^0(\theta)$ as we like.  Hence $G$ is strongly quasidiagonal by \cite[Theorem 25]{Hadwin87}.

In order to prove (3) implies (1) suppose  that $G$ is not virtually nilpotent.  By \cite{Wolf68} (see also \cite{Tits81}), it follows that $\alpha$ has an eigenvalue $\lambda$ with modulus greater than 1.  
For a vector $w\in\C^d$ we write $w_i\in \C$ for the $i$-th entry of $w.$
Suppose first that $\lambda\in\R.$ Without loss of generality, let $w\in\R^d$ be an eigenvector with $w_1=1.$  By \cite{Pollington79} there is an $\ve>0$ and a  $\xi\in \R$ such that $\V \xi \lambda^n \V\geq \ve$ for all $n\geq0.$  Since $\lambda>1$ we have
\begin{equation*}
\lim_{n\rightarrow\infty} \alpha^{-n}(\xi w)_1\rightarrow 0,
\end{equation*}
while 
\begin{equation*}
\alpha^n(\xi w)_1 \textup{ mod } 1 \in [\ve,1-\ve] \quad \textup{ for all }\quad n\geq1.
\end{equation*}
It now easily follows from \cite[Theorem 25]{Hadwin87} (see also \cite{Smucker82}) that $G$ has a quotient that is not finite.

Suppose now that $\lambda\in\C\setminus\R.$  We mention that if the modulus of $\lambda$ is sufficiently large, then we can apply \cite[Theorem 1.2(b)]{Dubickas10} and take real parts to complete the proof as in the real case.  For the general case, let $w\in \C^d$ be an eigenvector for $\alpha$ associated with $\lambda.$  Since the entries of $\alpha$ are real, it follows that $\overline{\lambda}$ is an eigenvalue for $\alpha$ with eigenvector $\overline{w}$.  Without loss of generality suppose that $w_1=1$ and $w_2=a+bi$ with $b\neq0.$

Set $v=1/2(w+\overline{w}).$ We have
\begin{equation*}
\lim_{n\rightarrow\infty}\alpha^{-n}(v)=\mathbf{0}\in \R^d,
\end{equation*}
and for every $n\geq 1$
\begin{equation*}
\alpha^n(v)_1=\textup{Re}(\lambda^n), \quad \alpha^n(v)_2=a\textup{Re}(\lambda^n)-b\textup{Im}(\lambda^n).
\end{equation*}
Since $|\textup{Re}(\lambda^n)|^2+|\textup{Im}(\lambda^n)|^2=|\lambda|^{2n}$, it follows that the sequence
\begin{equation*}
t_n=\max \{ |\textup{Re}(\lambda^n)|, |a\textup{Re}(\lambda^n)-b\textup{Im}(\lambda^n)| \}
\end{equation*}
satisfies the hypotheses of Lemma \ref{lem:expgrowth}.  Let $\xi,\ve$ be as in Lemma \ref{lem:expgrowth}.  It then follows that for $n\in\N$ we have either $\alpha^n(\xi)_1\in [\ve,1-\ve] \textup{ mod }1$ or $\alpha^n(\xi)_2\in  [\ve,1-\ve] \textup{ mod }1.$ Again it now easily follows from \cite[Theorem 25]{Hadwin87} that $G$ has a quotient that is not finite.
\end{proof}

\section*{Acknowledgements} I would like to thank Christopher Lee for pointing out Lemma \ref{lem:topfree} to me.  I would also like to thank the operators of the website mathoverflow.net and Terry Tao for answering my question on mathoverflow (see \cite{Tao13}) proving Lemma \ref{lem:expgrowth}.


\begin{thebibliography}{10}

\bibitem{Archbold94}
R.~J. Archbold and J.~S. Spielberg.
\newblock Topologically free actions and ideals in discrete {$C^*$}-dynamical
  systems.
\newblock {\em Proc. Edinburgh Math. Soc. (2)}, 37(1):119--124, 1994.

\bibitem{Bekka90}
Mohammed E.~B. Bekka.
\newblock Amenable unitary representations of locally compact groups.
\newblock {\em Invent. Math.}, 100(2):383--401, 1990.

\bibitem{Berend83}
Daniel Berend.
\newblock Multi-invariant sets on tori.
\newblock {\em Trans. Amer. Math. Soc.}, 280(2):509--532, 1983.

\bibitem{Brown04}
Nathanial~P. Brown.
\newblock On quasidiagonal {$C\sp *$}-algebras.
\newblock In {\em Operator algebras and applications}, volume~38 of {\em Adv.
  Stud. Pure Math.}, pages 19--64. Math. Soc. Japan, Tokyo, 2004.

\bibitem{Carrion13}
Jos{\'e}~R. Carri{\'o}n, Marius Dadarlat, and Caleb Eckhardt.
\newblock On groups with quasidiagonal {$C^*$}-algebras.
\newblock {\em J. Funct. Anal.}, 265(1):135--152, 2013.

\bibitem{Cook74}
R.~J. Cook.
\newblock On the fractional parts of a set of points. {III}.
\newblock {\em J. London Math. Soc. (2)}, 9:490--494, 1974/75.

\bibitem{Davidson96}
Kenneth~R. Davidson.
\newblock {\em {C}*-algebras by example}.
\newblock Number~6 in Fields Institute Monographs. American Mathematical
  Society, Providence, R.I., 1996.

\bibitem{Mathan80}
B.~de~Mathan.
\newblock Numbers contravening a condition in density modulo {$1$}.
\newblock {\em Acta Math. Acad. Sci. Hungar.}, 36(3-4):237--241 (1981), 1980.

\bibitem{Dubickas10}
Art{\=u}ras Dubickas.
\newblock On the distribution of powers of a complex number.
\newblock {\em Arch. Math. (Basel)}, 95(2):151--160, 2010.

\bibitem{Eckhardt13}
Caleb Eckhardt.
\newblock On quasidiagonal representations of nilpotent groups.
\newblock  \texttt{arXiv:1303.2376v4 }, 2013.

\bibitem{Furstenberg67}
Harry Furstenberg.
\newblock Disjointness in ergodic theory, minimal sets, and a problem in
  {D}iophantine approximation.
\newblock {\em Math. Systems Theory}, 1:1--49, 1967.

\bibitem{Hadwin87}
Don Hadwin.
\newblock Strongly quasidiagonal {$C^*$}-algebras.
\newblock {\em J. Operator Theory}, 18(1):3--18, 1987.
\newblock With an appendix by Jonathan Rosenberg.

\bibitem{Haynes13}
Alan Haynes and Sara Munday.
\newblock Diophantine approximation and coloring.
\newblock arXiv:1308.0208 [math.NT], 2013.

\bibitem{Khintchine26}
A.~Khintchine.
\newblock Zur metrischen {T}heorie der diophantischen {A}pproximationen.
\newblock {\em Math. Z.}, 24(1):706--714, 1926.

\bibitem{Pimsner83}
Mihai~V. Pimsner.
\newblock Embedding some transformation group {$C^{\ast} $}-algebras into
  {AF}-algebras.
\newblock {\em Ergodic Theory Dynam. Systems}, 3(4):613--626, 1983.

\bibitem{Pollington79}
A.~D. Pollington.
\newblock On the density of sequence {$\{n_{k}\xi \}$}.
\newblock {\em Illinois J. Math.}, 23(4):511--515, 1979.

\bibitem{Ribenboim01}
Paulo Ribenboim.
\newblock {\em Classical theory of algebraic numbers}.
\newblock Universitext. Springer-Verlag, New York, 2001.

\bibitem{Rosenblatt74}
Joseph~Max Rosenblatt.
\newblock Invariant measures and growth conditions.
\newblock {\em Trans. Amer. Math. Soc.}, 193:33--53, 1974.

\bibitem{Segal83}
Daniel Segal.
\newblock {\em Polycyclic groups}, volume~82 of {\em Cambridge Tracts in
  Mathematics}.
\newblock Cambridge University Press, Cambridge, 1983.

\bibitem{Smucker82}
Russell Smucker.
\newblock Quasidiagonal weighted shifts.
\newblock {\em Pacific J. Math.}, 98(1):173--182, 1982.

\bibitem{Tao13}
Terry Tao.
\newblock mathoverflow.net/questions/140997.
\newblock Math Overflow, 2013.

\bibitem{Tits81}
Jacques Tits.
\newblock Appendix to: ``{G}roups of polynomial growth and expanding maps''\
  [{I}nst. {H}autes \'{E}tudes {S}ci. {P}ubl. {M}ath. {N}o. 53 (1981), 53--73]\
  by {M}. {G}romov.
\newblock {\em Inst. Hautes \'Etudes Sci. Publ. Math.}, (53):74--78, 1981.

\bibitem{Voiculescu76}
Dan Voiculescu.
\newblock A non-commutative {W}eyl-von {N}eumann theorem.
\newblock {\em Rev. Roumaine Math. Pures Appl.}, 21(1):97--113, 1976.

\bibitem{Wolf68}
Joseph~A. Wolf.
\newblock Growth of finitely generated solvable groups and curvature of
  {R}iemanniann manifolds.
\newblock {\em J. Differential Geometry}, 2:421--446, 1968.

\end{thebibliography}
\end{document}